\documentclass[12pt,a4paper]{article}
\usepackage[margin=1in]{geometry}
\usepackage{amsmath, amsthm, amscd, amsfonts, amssymb, graphicx, color,float,listings,ragged2e}

\newtheorem{thm}{Theorem}[section]
\newtheorem{prop}[thm]{Proposition}
\newtheorem{cor}[thm]{Corollary}

\newcommand{\Pow}{\mathop{\mathrm{Pow}}}
\newcommand{\Com}{\mathop{\mathrm{Com}}}
\newcommand{\Gen}{\mathop{\mathrm{Gen}}}
\newcommand{\EPow}{\mathop{\mathrm{EPow}}}

\lstdefinestyle{mystyle}{
	%basicstyle=\ttfamily\footnotesize,
	basicstyle=\ttfamily\small,
	breakatwhitespace=false,
	breaklines=true,}

\lstset{style=mystyle}
\def \ni{\noindent}

\author{Peter J. Cameron\footnote{E-mail: pjc20@st-andrews.ac.uk}\\ School of Mathematics and Statistics\\ University of St. Andrews\\ Fife, UK \and
	Aparna Lakshmanan S.\footnote{E-mail: aparnals@cusat.ac.in, aparnaren@gmail.com} and Midhuna V. Ajith\footnote{Email: midhunavajith@gmail.com}\\
	Department of Mathematics\\
	Cochin University of Science and Technology\\Cochin -
	22\\\vspace{0.2cm} Kerala, India.}

\begin{document}

\title{Hypergraphs defined on algebraic structures}

\date{February 2023}
\maketitle

\begin{abstract}
There has been a great deal of research on graphs defined on algebraic
structures in the last two decades. In this paper we begin an exploration of
hypergraphs defined on algebraic structures, especially groups, to investigate
whether this can add a new perspective.\\
\ni\line(1,0){395}\\
\ni {\bf Keywords:}  Commuting hypergraphs, Power hypergraphs, Enhanced power hypergraphs, Generating hypergraphs\\

\ni {\bf AMS Subject Classification:} 05C25, 05C65 \\
\ni\line(1,0){395}
\end{abstract}

\section{Hypergraphs}

An \emph{undirected hypergraph} $H$ is a pair $H=(V,E)$ where $V$ is a set of elements called nodes or vertices and $E$ is a non-empty subset of $\mathcal{P}(V)$ (power set of $V$) called hyperedges. The \emph{degree} of a vertex $v \in V$ is the number of hyperedges incident with $v$. A hypergraph is \emph{regular}
if all its vertices have the same degree; it is \emph{uniform} if every edge
has cardinality $k$.

Figure~\ref{f:ex} shows a hypergraph. Its degree sequence is
$\{1,1,2,2,0,1,2,2\}$.

\begin{figure}[htbp]
	\centering
	\includegraphics[width=6cm]{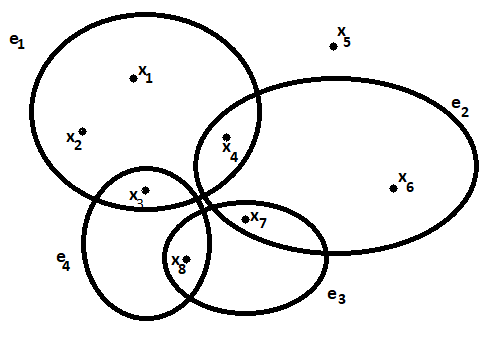}
	\caption{A hypergraph $H$ with 8 vertices and 4 hyperedges}
	\label{f:ex}
\end{figure}

One important class of hypergraphs we will meet consists of the basis
hypergraphs of matroids. A \emph{basis} of a matroid is a maximal independent
set. The collection of matroid bases is characterised by the two properties
\begin{itemize}
\item there is at least one basis;
\item (the \emph{Exchange Axiom}): if $A,B$ are bases and $b\in B\setminus A$,
then there exists $a\in A\setminus B$ such that $A\setminus\{a\}\cup\{b\}$ is
a basis.
\end{itemize}
It follows from the Exchange Axiom that any two bases have the same number
of elements; that is, the basis hypergraph is uniform.

Our algebraic structures will mostly be groups. There are three natural
(overlapping) ways we can form hypergraphs from groups:
\begin{itemize}
\item We can take all sets maximal with respect to some property (e.g.
maximal sets of pairwise commuting elements) or minimal with respect to some
property (e.g. minimal generating sets).
\item We can take hypergraph edges to be maximal cliques in some graph
associated with $G$.
\item We can take the family of proper subgroups of $G$, or subgroups of
some particular type (for example, abelian).
\end{itemize}
We will give examples of all three methods. We make one elementary observation
here.

\paragraph{Definition}
        The \emph{open neighbourhood} $N_H(v)$ of the vertex $v$ in a hypergraph $H$ is the set of all vertices $u \neq v$ such that $u,v \in e$, where $e$ is a hyperedge in $G$. The open neighbourhood together with the vertex $v$ is called the \emph{closed neighbourhood} $N_H [v]$ of the vertex $v$ in  the hypergraph $H$.

We warn readers not to confuse $N_H(v)$ with the group-theoretic notion of the
\emph{normalizer} of a subset or subgroup of a group.

\begin{prop}
If $\Gamma$ is a graph defined on a finite algebraic structure, and $H$ is the
hypergraph whose edges are maximal cliques in $\Gamma$, then the neighbourhoods
of any vertex $v$ in $\Gamma$ and $H$ coincide.
\label{p:nbd}
\end{prop}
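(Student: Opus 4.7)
The plan is to prove the two inclusions $N_H(v)\subseteq N_\Gamma(v)$ and $N_\Gamma(v)\subseteq N_H(v)$ separately, both being short arguments; finiteness of the underlying structure is used only in one direction.

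First I would handle the easy inclusion $N_H(v)\subseteq N_\Gamma(v)$. If $u\in N_H(v)$, then by the definition of neighbourhood in a hypergraph there is a hyperedge $e$ of $H$ containing both $u$ and $v$. Since every hyperedge of $H$ is, by hypothesis, a maximal clique of $\Gamma$, the two distinct vertices $u$ and $v$ lie in a common clique of $\Gamma$, and hence are adjacent in $\Gamma$. Thus $u\in N_\Gamma(v)$.

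For the reverse inclusion $N_\Gamma(v)\subseteq N_H(v)$, suppose $u\in N_\Gamma(v)$, so that $\{u,v\}$ is a clique of $\Gamma$. Here I use the finiteness of the vertex set: any clique of a finite graph is contained in some maximal clique, obtained by repeatedly adjoining a vertex adjacent to everything already chosen until no such vertex remains. Let $e$ be a maximal clique of $\Gamma$ containing $\{u,v\}$; then $e$ is by definition a hyperedge of $H$, witnessing that $u\in N_H(v)$.

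The main (and essentially only) obstacle is to make sure the clique-extension argument is stated cleanly; it is a standard Zorn/greedy step that becomes trivial in the finite setting where the proposition is stated. Combining both inclusions yields $N_H(v)=N_\Gamma(v)$, as required.
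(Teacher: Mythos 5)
Your proof is correct and follows essentially the same route as the paper, whose one-line argument rests on exactly the same key fact you use: every edge of a finite graph lies in a maximal clique, which gives the nontrivial inclusion $N_\Gamma(v)\subseteq N_H(v)$; the other inclusion is immediate since hyperedges are cliques. You have simply spelled out both directions explicitly.
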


This holds because any edge of a finite graph is contained in a maximal clique. For more information on hypergraphs and matroids we suggest~\cite{gh,matroid}.

\section{Commuting hypergraphs}

Let $S$ be a semigroup. The commuting hypergraph $\Com_H(S)$ of $S$ is an undirected  hypergraph with the set $S$ of vertices and $E \subseteq S$ is a hyperedge if and only if
\begin{enumerate}
	\item For every $a,b \in E$, $ab=ba$.
	\item There does not exists an $E' \supset E$ such that $E'$ satisfies (a).
\end{enumerate}
Thus, the edges of the commuting hypergraph are the maximal cliques in the
\emph{commuting graph} of $S$ (the graph with vertex set $S$, in which
$x$ and $y$ are joined if $xy=yx$. So by Proposition~\ref{p:nbd}, the
neighbourhoods of a vertex in the commuting graph and commuting hypergraph
coincide.

Figure~\ref{f:com} shows the commuting hypergraph of the quaternion group $Q_8$.
The vertex set is $V=\{1,-1,i,-i,j,-j,k,-k\}$ and the edge set is
\[E =\{e_1,e_2,e_3\}=\{\{1,-1,i,-i\}, \{1,-1,j,-j\},\{1,-1,k,-k\}\}.\]	
	
\begin{figure}[h]
	\centering
	\includegraphics[width=0.4\linewidth]{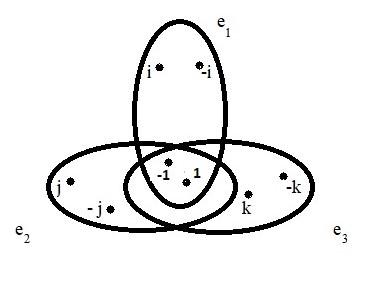}
	\caption{$\Com_H(Q_8)$}
	\label{f:com}
\end{figure}

\begin{thm}
	Let $G$ be a group. The hyperedges of $\Com_H(G)$ are the maximal abelian subgroups of $G$, where maximality is taken over inclusion.
\end{thm}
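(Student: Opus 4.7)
The plan is to prove the two families coincide by establishing a pair of inclusions: every maximal abelian subgroup is a maximal set of pairwise commuting elements (hence a hyperedge of $\Com_H(G)$), and conversely every hyperedge is a maximal abelian subgroup. The hyperedges are, by definition and by the discussion preceding the theorem, exactly the maximal cliques of the commuting graph, i.e.\ the maximal subsets $E\subseteq G$ whose elements pairwise commute.

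For the first inclusion, I would take a maximal abelian subgroup $A$ of $G$. Its elements pairwise commute, so $A$ is a clique in the commuting graph. To see that $A$ is a maximal such clique, suppose some $g\in G\setminus A$ commutes with every element of $A$; then $\langle A\cup\{g\}\rangle$ is still abelian and strictly contains $A$, contradicting the maximality of $A$ among abelian subgroups.

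The converse direction is the substantive part of the proof. Let $E$ be a hyperedge, i.e.\ a maximal set of pairwise commuting elements. I would exploit maximality through the principle: \emph{any element of $G$ that commutes with every element of $E$ must itself lie in $E$} (otherwise $E$ could be enlarged). Using this I would verify the subgroup axioms in turn. First, $1$ commutes with everything, so $1\in E$. Next, if $a,b\in E$ and $c\in E$, then
\[
(ab)c = a(bc) = a(cb) = (ac)b = (ca)b = c(ab),
\]
so $ab$ commutes with every element of $E$, whence $ab\in E$. Finally, if $a\in E$ and $c\in E$ then $ac=ca$ gives $a^{-1}c = ca^{-1}$ after multiplying on both sides by $a^{-1}$, so $a^{-1}$ also commutes with every element of $E$ and lies in $E$. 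Hence $E$ is an abelian subgroup of $G$.

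It remains to observe that $E$ is in fact a \emph{maximal} abelian subgroup: any abelian subgroup $A'\supseteq E$ would be a larger set of pairwise commuting elements, contradicting the maximality of $E$ in the commuting graph. I expect the main obstacle, such as it is, to be the middle step above: the verification that a maximal commuting set is closed under the group operations, via the observation that the centralizer of $E$ in $G$ is a subgroup and that, by maximality, this centralizer collapses onto $E$ itself.
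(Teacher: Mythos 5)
Your proposal is correct and takes essentially the same route as the paper: use the maximality of a hyperedge to conclude that any element commuting with all of $E$ lies in $E$, verify closure under products and inverses to see $E$ is an abelian subgroup, and conversely observe that an abelian subgroup is a commuting set whose maximality forces it to be a hyperedge. Your write-up is in fact slightly more explicit than the paper's on the two maximality checks (that a maximal abelian subgroup is a maximal clique, and that a hyperedge is maximal among abelian subgroups), but the underlying argument is the same.
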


\begin{proof}
	Let $G_e$ be the subset of elements of $G$ corresponding to the vertices of the hyperedge $e$.

Let $x,y \in G_e$. then for every $z \in G_e, xz=zx$ and $yz=zy$.\\
	 Now, $z(xy)=(zx)y=(xz)y=x(zy)=x(yz)=(xy)z \implies xy \in G_e$.\\
	 Now, let $x \in G_e$.

For every $x \in e,\ xz=zx \implies x^{-1}xzx^{-1}=x^{-1}zxx^{-1} \implies zx^{-1}=x^{-1}z \implies x^{-1} \in G_e$.

Clearly, $ze=ez,$ for every $z \in e$. So $e \in G_e$.
Hence $G_e$ is an abelian group.

Conversely, the elements of an abelian subgroup $A$ commute with each other and
hence $A$ is contained in a hyperedge. The maximality of the abelian group
follows from the second condition in the definition of hyperedge of $\Com_H(G)$.
\end{proof}

\paragraph{Remark}
Let $C(v)$ denote the centralizer of an element $v$ in a semigroup $S$. Then
$C(v)=N_H[v]$. (This follows immediately from Proposition~\ref{p:nbd}.)

\paragraph{Remark}
For any hyperedge $E$, we have $E\subseteq C(v)$ for all $v\in E$, and in fact
\[E=\bigcap_{v\in E}C(v).\]

\begin{thm}\label{two}
Let $S$ be a semigroup without zero divisors. Then the degree of a vertex in $\Com_H (S)$ can never be 2.
	\begin{proof}
		Suppose on the contrary that there exists a vertex $z \in V(C_H)$ such that $deg(z)=2$. Let $x, y $ be two vertices adjacent to $z$ (i.e., commutes with $z$) such that they do not commute with each other i.e., $x$ and $y$ belongs to two different hyperedges $e_1$ and $e_2$, respectively, containing $z$. Then
		\begin{equation*}
		 z(xy)=(zx)y=(xz)y=x(zy)=x(yz)=(xy)z
		\implies xy \sim z.
		\end{equation*}
		 If we show that $xy$ does not belong to the hyperedges $e_1$ or $e_2$, we get a contradiction to $deg(z) = 2$. Assume that $xy \in e_1$, then
		 \begin{equation*}
		 xxy=xyx \implies xy=yx \quad \text{(Cancellation Law)}
		 \end{equation*}
		 But this contradicts our choice of $x$ and $y$. The proof of the case $xy \in e_2$ is similar.
	\end{proof}
\end{thm}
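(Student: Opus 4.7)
The plan is to argue by contradiction: assume some vertex $z$ lies in exactly two hyperedges $e_1,e_2$, and manufacture an element that commutes with $z$ but cannot lie in either $e_1$ or $e_2$, contradicting $\deg(z)=2$.

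First I would unpack what $\deg(z)=2$ means. Since $e_1$ and $e_2$ are distinct maximal commuting sets, neither contains the other, so I can pick $x\in e_1\setminus e_2$ and $y\in e_2\setminus e_1$. Before the main computation, I want to observe that any such $x,y$ must fail to commute: if instead $xy=yx$, then $\{x,y,z\}$ is pairwise commuting (each commutes with $z$ by virtue of sharing a hyperedge with $z$), hence extends to a maximal commuting set $e$ which contains $z$; by hypothesis $e\in\{e_1,e_2\}$, but $y\notin e_1$ and $x\notin e_2$, contradiction. So the non-commuting pair $(x,y)$ is automatic from the setup and does not need to be assumed.

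Next I would carry out the associativity computation $z(xy)=(zx)y=(xz)y=x(zy)=x(yz)=(xy)z$, showing that $xy$ commutes with $z$. Then $\{xy,z\}$ extends (together with all elements that commute with both) to some maximal commuting set containing $z$, which again must be $e_1$ or $e_2$. In the first case $xy\in e_1$, so $xy$ commutes with $x$: $x(xy)=(xy)x$, i.e.\ $x(xy)=x(yx)$. Here I invoke the no-zero-divisors hypothesis to left-cancel $x$, obtaining $xy=yx$, contradicting the previous paragraph. The case $xy\in e_2$ is handled symmetrically by right-cancelling $y$ in $y(xy)=(xy)y$ (or by an analogous left/right argument).

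The only delicate point is the cancellation step: in a semigroup ``without zero divisors'' one must justify that $xu=xv$ implies $u=v$ whenever $x$ is nonzero, and verify that the $x$ (respectively $y$) being cancelled is not the zero element (which, if it existed, would commute with everything and hence lie in every hyperedge, so could be excluded from the choice of $x,y$). This is the main conceptual obstacle; once the cancellation is licensed, the rest is a short associativity calculation plus the maximality argument.
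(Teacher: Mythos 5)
Your proposal is correct and follows essentially the same route as the paper's proof: the associativity computation showing $xy$ commutes with $z$, then forcing $xy$ into $e_1$ or $e_2$ and cancelling $x$ (respectively $y$) to conclude $xy=yx$, contradicting the choice of $x,y$. Your extra observations---that a non-commuting pair $x\in e_1\setminus e_2$, $y\in e_2\setminus e_1$ exists automatically from $\deg(z)=2$, and that a zero element would lie in every hyperedge so the cancelled elements are nonzero---merely tighten details the paper leaves implicit (including its unargued passage from ``no zero divisors'' to the cancellation law), rather than constituting a different approach.
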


\noindent{\bf Remark:}	The proof of Theorem \ref{two} will not work for semigroups with zero divisors. For example, consider $M_2(\mathbb{R})$.
	$\text{Let} \quad B=	\begin{bmatrix}
		3 & 4\\
		6 & 8
	\end{bmatrix}
 \text{and} \quad C=
    \begin{bmatrix}
    	2 & 0\\
    	\frac{-3}{4} & 1
    \end{bmatrix} \in M_2(\mathbb{R})$.
     Even though $B$ and $C$ do not commute with each other, $B$ and $BC$ commutes with each other so that $BC$ belongs to the hyperedge that contains $B$.\\

\section{Enhanced power hypergraphs}

Before turning to the power hypergraphs, we will briefly outline the enhanced
power hypergraphs, bearing the same relation to the power hypergraphs as the
enhanced power graphs (defined in \cite{aacns}) to the power graphs
(defined in~\cite{Cha}).

Let $S$ be a semigroup. The enhanced power hypergraph $\EPow_H (S)$ of $S$ is
an undirected  hypergraph with the set $S$ as the set of vertices and
$E \subseteq S$ is a hyperedge if and only if
\begin{enumerate}
\item For every $a,b \in E$, there exists $c\in S$ so that both $a$ and $b$
are powers of $c$;
\item There does not exist $E' \supset E$ such that $E'$ satisfies (a).
\end{enumerate}

Another way of stating the first condition is that $\langle a,b\rangle$ is a
cyclic (or $1$-generator) semigroup, since any subsemigroup of a cyclic
semigroup is cyclic.

It is shown in \cite[Lemma 32]{aacns} that if a finite set $X$ of elements in a
group has the property that any two of its elements generate a cyclic group,
then $X$ generates a cyclic group. We do not know the analogous result for
semigroups, so we will consider only groups in the remainder of this section.
It follows from this fact that a maximal set of elements of a group $G$, any
two of which generate a cyclic subgroup, is a maximal cyclic subgroup of $G$.
Hence the hyperedges of $\EPow_H(G)$ are the maximal cyclic subgroups of $G$.
In particular, $\EPow_H(G)$ has a single hyperedge containing all vertices if
and only if $G$ is a cyclic group.

\section{Power hypergraphs}

Let $S$ be a semigroup. The power hypergraph $\Pow_H (S)$ of $S$ is an undirected  hypergraph with the set $S$ as the set of vertices and $E \subseteq S$ is a hyperedge if and only if
\begin{enumerate}
	\item For every $a,b \in E$, $a^m=b$ or $b^n=a$ for some $m,n \in \mathbb{N}$.
	\item There does not exists an $E' \supset E$ such that $E'$ satisfies (a).
\end{enumerate}

\paragraph{Example}
Let $S$ be $\mathbb{Z}_2 \times \mathbb{Z}_3$ under addition $(+_2,+_3)$.
The vertex set of the power hypergraph of $S$ is
$V=\{(0,0),(0,1),(0,2),(1,0),(1,1),(1,2)\}$ and the edge set is
\[E =\{\{(0,0),(1,1),(1,2),(0,1),(0,2)\},\{(0,0),(1,1),(1,2),(1,0)\}\}.\]
This is shown in Figure~\ref{f:ph}.

\begin{figure}[htbp]
	\centering
	\includegraphics[width=0.3\linewidth]{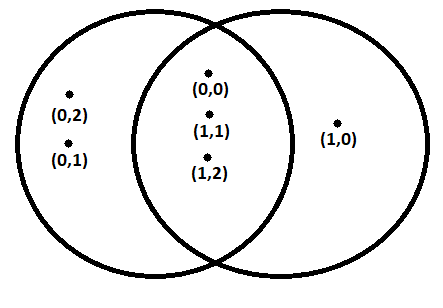}
	\caption{$\Pow_H(\mathbb{Z}_2 \times \mathbb{Z}_3)$}
	\label{f:ph}
\end{figure}

\noindent{\bf Remark:} Each hyperedge in $\Pow_H(G)$ is a clique (maximal complete) in $\Pow(G)$ and is contained in a clique of the enhanced power graph,
that is, a maximal cyclic subgroup of $G$.  So in order to understand
$\Pow_H(G)$ for arbitrary groups, we must study it for cyclic groups. Also, in \cite{gh}, the clique hypergraph of a graph $G$ is defined as the hypergraph with same vertex set as that of $G$ and the edge set is the family of vertex sets of maximal cliques in the graph $G$. So, the power hypergraphs can also be viewed as the clique hypergraph of the power graph of $G$. (Similarly, a commuting hypergraph can be viewed as clique hypergraph of the commuting graph of $G$.)

A cyclic group $G$ has a unique subgroup of each order dividing $|G|$, which is
itself cyclic. Thus, two elements of a cyclic group $G$ are contained in a
hyperedge of $\Pow_H(G)$ if and only if the order of one divides the order
of the other.

Let $G=\mathbb{Z}_n$. There are $d(n)$ different orders of elements of $G$, where $d$ is the divisor function ($d(n)$ is the number of divisors of $n$). If $m$ divides $n$, then the number of elements of order $m$ is $\phi(m)$, where $\phi$ is the Euler's function.

\begin{thm}
Let $G=\mathbb{Z}_n$. Then the number of hyperedges of $\Pow_H(G)$ is equal
to the number of maximal chains in the lattice of divisors of $n$; the
hyperedge corresponding to the chain $1=n_0,n_1,\ldots,n_{r-1},n_r=n$ has
cardinality $\sum_{i=0}^r\phi(n_i)$.

If $n=p_1^{a_1}p_2^{a_2}\cdots p_s^{a_s}$, then the number of maximal chains is equal to the
multinomial coefficient
\[{a_1+a_2+\cdots+a_s\choose a_1,a_2,\ldots,a_s}
=\frac{(a_1+a_2+\cdots+a_s)!}{a_1!a_2!\cdots a_s!}.\]
\end{thm}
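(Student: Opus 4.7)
The plan is to use the observation made in the paragraph preceding the theorem: in the cyclic group $\mathbb{Z}_n$, two elements lie in a common hyperedge of $\Pow_H$ if and only if the order of one divides the order of the other. Thus the ``power-of'' relation on $\mathbb{Z}_n$ reduces to divisibility on the set of element orders. If $E$ is a hyperedge of $\Pow_H(\mathbb{Z}_n)$ and $C_E=\{|a|:a\in E\}$, then $C_E$ is a chain in the divisor lattice of $n$. Moreover, for any $d\in C_E$, every element of $\mathbb{Z}_n$ of order $d$ has its order comparable with $|a|$ for each $a\in E$, so the maximality clause (b) forces all $\phi(d)$ such elements to lie in $E$ already. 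Hence $|E|=\sum_{d\in C_E}\phi(d)$.

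Next I would argue that $C_E$ must be a \emph{maximal} chain running from $1$ to $n$. If $1\notin C_E$, the identity can be added; if $n\notin C_E$, any generator of $\mathbb{Z}_n$ can be added (its order $n$ is divisible by every $|a|$); and if consecutive entries $n_i<n_{i+1}$ of $C_E$ fail to be a covering pair in the divisor lattice, so that some proper divisor $d'$ with $n_i\mid d'\mid n_{i+1}$ is missing, then any element of order $d'$ has order comparable with every order in $C_E$ and can be added. Each of these contradicts the maximality of $E$. Conversely, starting from any maximal chain $1=n_0\mid n_1\mid\cdots\mid n_r=n$, the set of all elements of $\mathbb{Z}_n$ whose order lies on the chain satisfies condition (a), since any two such elements have comparable orders, and is maximal by the same arguments. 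This establishes a bijection between hyperedges of $\Pow_H(\mathbb{Z}_n)$ and maximal chains in the divisor lattice, which gives both the enumeration of hyperedges and the cardinality formula.

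For the count of maximal chains when $n=p_1^{a_1}\cdots p_s^{a_s}$, I would use the standard fact that every covering step $n_i\mid n_{i+1}$ in the divisor lattice corresponds to multiplication by exactly one prime $p_j$, so a maximal chain has length $a_1+\cdots+a_s$ and is encoded by a word of that length over $\{p_1,\ldots,p_s\}$ in which $p_j$ appears $a_j$ times. Counting such words gives the multinomial coefficient. The main obstacle is the first half of the argument: one must carefully show that the maximality of $E$ forces \emph{both} completeness within each order-stratum (all $\phi(d)$ elements present) \emph{and} maximality of the underlying chain in the divisor lattice. Once this correspondence is in hand, the remaining counts are essentially bookkeeping.
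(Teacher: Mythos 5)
Your argument is correct, but it is organised differently from the paper's. The paper's proof consists only of the final count of maximal chains: it treats the correspondence between hyperedges and maximal chains (and the cardinality formula $\sum_i\phi(n_i)$) as immediate from the remark preceding the theorem, that in a cyclic group two elements lie in a common hyperedge exactly when the order of one divides the order of the other. You instead spell this correspondence out in full — showing that maximality of $E$ forces both saturation of each order-stratum (all $\phi(d)$ elements of each order $d$ on the chain) and maximality of the chain of orders itself, and checking the converse — which is a genuine and worthwhile filling-in of what the paper leaves implicit; your covering-pair argument using that every member of the chain either divides $n_i$ or is divisible by $n_{i+1}$ is exactly the right point. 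For the enumeration of maximal chains you also diverge: the paper sets up the recurrence $N(1)=1$, $N(n)=\sum_{p\mid n}N(n/p)$, observes $N$ depends only on the exponent vector, and matches this with the Pascal-type recurrence for the multinomial coefficient, whereas you give a direct bijection between maximal chains and words of length $a_1+\cdots+a_s$ over $\{p_1,\ldots,p_s\}$ with $p_j$ occurring $a_j$ times, since each covering step multiplies by a single prime. Your bijective encoding is more direct and self-evidently exact; the paper's recurrence has the mild advantage of making transparent, without further comment, that $N(n)$ depends only on the multiset of exponents. Both are complete proofs of the stated formula.
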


\begin{proof} We only have to establish the formula. We note that there is
a recurrence relation for the number $N(n)$ of maximal chains in the lattice
of divisors of $n$, namely
\[N(1)=1,\quad N(n)=\sum_{p\mid n}N(n/p),\]
where the sum is over all the distinct prime divisors of $n$, since the
first step down in such a chain must be from $n$ to $n/p$ for some $p\mid n$.
It follows that $N(n)$ does not depend on the values of the prime divisors of
$n$, but only on their number and their exponents. If we set
\[f(a_1,\ldots,a_s)=N(p_1^{a_1}p_2^{a_2}\cdots p_s^{a_s}),\]
then the recurrence is
\begin{eqnarray*}
f(0,0,\ldots,0)&=&1,\\
f(a_1,a_2,\ldots,a_s)&=&f(a_1-1,a_2,\ldots,a_s)+f(a_1,a_2-1,\ldots,a_s)
+\cdots\\
&&\hbox{ (a term is omitted if it involves a negative argument)}
\end{eqnarray*}
But this is exactly the recurrence for the multinomial coefficient, where we
interpret it as the number of ways of colouring $1,\ldots,a_1+a_2+\cdots+a_s$
so that there are $a_i$ of colour $i$ for all $i$.
\end{proof}

Two special cases are worth remarking:
\begin{enumerate}
\item If $n$ is squarefree with $s$ distinct prime divisors, then $N(n)=s!$.
\item If $n=p^aq^b$ where $p$ and $q$ are distinct primes, then
$N(n)=\displaystyle{a+b\choose a}$, the number of lattice paths from the origin
to $(a,b)$ which move only right and upwards at each step.
\end{enumerate}

The cardinality of the hyperedge corresponding to the chain
$(1=n_0,n_1,\ldots,n_t=n)$ is
\[\sum_{i=0}^t\phi(n_i).\]

\subsection{Hamiltonicity of $\Pow_H(G)$}

\paragraph{Definition} \cite{Hyp}
	A \emph{path} in a hypergraph $H = (V, E)$ between two distinct vertices $x_1$ and $x_k$ is a sequence
	$x_1, e_1, \ldots , x_{k-1}, e_{k-1}, x_k$ with the following properties:
	\begin{enumerate}
		\item $x_1,\dots, x_k$ are distinct vertices.
		\item $e_1, \dots, e_{k-1}$ are hyperedges (not necessarily distinct).
		\item $x_j , x_{j+1} \in e_ j $ for $j=1,2,\dots,k-1$.
	\end{enumerate}

If there is no ambiguity regarding the hyperedge chosen, this path is denoted by $P(x_1,x_2, \ldots, x_k)$.\\

\label{cyc} \paragraph{Definition}\cite{Hyp}
A  \emph{cycle} in a hypergraph  $H = (V, E)$ is a sequence $x_1, e_1,\dots, x_k, e_k, x_1$ with the following properties:
	\begin{enumerate}
		\item $k \geq 3$ is a positive integer.
		\item  $x_1, e_1,\dots, x_{k-1}, e_{k-1},x_k$ is a path from $x_1$ to $x_k$.
		\item $e_1, \dots, e_k$ are hyperedges (not necessarily distinct).
		\item  $x_j , x_{j+1} \in e_ j $ for $j=1,2,\dots,k$, where addition of indices is taken modulo $k$.
	\end{enumerate}
If there is no ambiguity regarding the hyperedge chosen, this cycle is denoted by $C(x_1, x_2, \ldots, x_k)$.

\paragraph{Definition}
	A hypergraph is \emph{Hamiltonian} if it has a spanning cycle.

\begin{thm}
The power hypergraph of a cyclic group is Hamiltonian.
\label{t:ham}
\end{thm}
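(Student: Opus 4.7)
The plan is to split into cases according to whether $n$ is a prime power, writing $G=\mathbb{Z}_n$ with $n\geq 3$. If $n=p^a$ is a prime power, the divisor lattice of $n$ is a chain, so there is a unique maximal chain and hence a single hyperedge of $\Pow_H(\mathbb{Z}_n)$ equal to $G$; its $2$-section is $K_n$, which is trivially Hamiltonian for $n\geq 3$.

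When $n$ has at least two distinct prime factors, I construct an explicit Hamiltonian cycle. For each divisor $d\mid n$, write $A_d$ for the set of elements of order $d$, so $|A_d|=\phi(d)$; in particular $A_1=\{e\}$ and $A_n$ consists of the $\phi(n)$ generators. Two observations drive the argument. First, the identity and every generator lie in \emph{every} hyperedge, because every maximal chain in the divisor lattice contains both $1$ and $n$. Second, for any intermediate divisor $d$ (with $1<d<n$), the chain $1\mid d\mid n$ extends to some maximal chain, whose hyperedge contains $A_d$ together with the identity and all the generators.

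Enumerate the intermediate divisors as $d_1,\ldots,d_t$ (so $t=d(n)-2$) and the generators as $g_1,\ldots,g_{\phi(n)}$. I propose the cyclic sequence
\[
e,\ A_{d_1},\ g_1,\ A_{d_2},\ g_2,\ \ldots,\ g_{t-1},\ A_{d_t},\ g_t,\ g_{t+1},\ \ldots,\ g_{\phi(n)},\ e,
\]
where each block $A_{d_i}$ is listed in an arbitrary order. Every consecutive pair lies in a common hyperedge: inside a block $A_{d_i}$, or at its interface with an adjacent generator or with $e$, use the hyperedge coming from any maximal chain through $d_i$ (which, by the second observation above, contains $A_{d_i}$ together with all generators and $e$); between two successive generators, or between $e$ and a generator, any hyperedge works by the first observation.

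The main obstacle is confirming that there are enough generators to act as separators between the $t$ blocks, namely $\phi(n)\geq t-1=d(n)-3$. I expect to derive this from the sharper classical inequality $\phi(n)\geq d(n)-2$ (valid for all $n\geq 1$, with equality only at $n\in\{6,12\}$): by multiplicativity of both $\phi$ and $d$, it suffices to check prime powers, and the only prime powers $p^a$ with $\phi(p^a)<d(p^a)$ are $p^a\in\{2,4\}$, each with deficit only $1$, so the bound follows by a short case analysis on whether $2$ or $4$ divides $n$.
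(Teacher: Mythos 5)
Your proof is correct in substance, but it takes a genuinely different route from the paper. The paper's argument is a two-line reduction: the hyperedges of $\Pow_H(G)$ are exactly the maximal cliques of the power graph $\Pow(G)$, every edge of $\Pow(G)$ lies in a maximal clique, and the power graph of a finite cyclic group of order at least $3$ is Hamiltonian by Chakrabarty, Ghosh and Sen \cite{Cha}; since the cycle definition in force here (from \cite{Hyp}) allows hyperedges to repeat, a Hamiltonian cycle of $\Pow(G)$ is already a Hamiltonian cycle of $\Pow_H(G)$. You instead build the spanning cycle explicitly from the chain description of the hyperedges: the identity and the $\phi(n)$ generators lie in every hyperedge, each order-class $A_d$ lies in some hyperedge together with them, and generators serve as separators between the blocks $A_{d_1},\ldots,A_{d_t}$. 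This is self-contained (no appeal to \cite{Cha}), makes the hypothesis $n\ge 3$ explicit --- which the paper leaves tacit and which is genuinely needed, since a cycle requires at least three distinct vertices --- and it is compatible with the permissive cycle definition the theorem uses. The price is the counting step $\phi(n)\ge d(n)-3$. The fact you invoke, $\phi(n)\ge d(n)-2$ with equality only for $n\in\{6,12\}$, is true, but your justification is too quick: additive deficits do not behave multiplicatively, so knowing that $2$ and $4$ are the only deficient prime powers does not by itself close the argument. For instance, if the $2$-part of $n$ is exactly $2$ with odd part $m$, you need $\phi(m)\ge 2d(m)-2$, and if the $2$-part is $4$ you need $2\phi(m)\ge 3d(m)-2$; these are easy (e.g.\ any odd $m$ with a prime factor at least $5$ satisfies $\phi(m)\ge 2d(m)$, and powers of $3$ are checked directly), but they require a further line of argument you have not supplied. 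Alternatively, you could sidestep the inequality entirely by ordering the blocks so that consecutive blocks lie on a common maximal chain of divisors, or simply quote the graph-theoretic result as the paper does.
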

\begin{proof}
	The hyperedges of the power hypergraph are the maximal cliques of the power graph. So it is enough to show that power graph is Hamiltonian and this was already done in \cite{Cha}.
\end{proof}

\paragraph{Note:} There is an alternate definition for paths and cycles of hypergraphs in \cite{gh} in which all the hyperedges in the sequence are distinct.
In a hypergraph $H=(X,E)$, an alternating sequence
\begin{equation*}
\mu=x_0 e_0 x_1 e_1 x_2 \ldots x_{t-1} e_{t-1} x_t
\end{equation*}
of distinct vertices $x_0, x_1, x_2, \ldots , x_{t-1}$ and distinct edges $e_0, e_1, e_2, \ldots e_{t-1}$ satisfying $x_i, x_{i+1} \in e_i, i=0,1,2, \ldots, t-1$ is called a \emph{path} connecting the vertices $x_0$ and $x_t$ and it is called a \emph{cycle} if $x_t=x_0$.\\
If this definition of cycles is used to define Hamiltonian hypergraphs, then all power hypergraphs need not be Hamiltonian. There are hypergraphs for which the number of hyperedges are less than the number of vertices. In particular, the power hypergraph of cyclic group of prime power order has only one hyperedge. The smallest $n$ for which the number of edges in the power hypergraph of the cyclic group of order $n$ is greater than $n$ is $n = 2^9.3^6.5^3. 7^2.11.13$. Therefore, there are no Hamiltonian power hypergraphs of order less than $n = 2^9.3^6.5^3. 7^2.11.13$. However, the characterization problem is open.

\paragraph{Question} Characterize Hamiltonian power hypergraphs.

\subsection{Connectedness of $\Pow_H(G)$}
Let $S$ be a finite semigroup. An element $e \in S$ is called an idempotent if $e^2 = e$. We
denote the set of all idempotents of $S$ by $E(S)$. Since $S$ is finite, for each $a \in S$, there exists $m \in N$ such that $a^m$ is an idempotent. Also if $a^m = e$ and $a^n = f$ for some $e,f \in E(S)$, then $e = a^{mn} = f$ . Let us define a binary relation $\rho$ on $S$ by
\begin{equation}\label{rho}
a \rho b \iff a^m = b^m
\end{equation}
for some $m \in N$.

\begin{thm}
	Let $S$ be a finite semigroup and $\rho$ be the binary relation defined by
equation (\ref{rho}) then for any $a,b \in S, a \rho b$ if and only if $a^{m_1} = b^{m_2} = e$ for some $m_1,m_2 \in N,e \in E(S)$.
\end{thm}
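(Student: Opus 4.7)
The plan is to exploit the observation made in the paragraph preceding the theorem: for every $a \in S$ there is some $k \in \mathbb{N}$ with $a^{k} \in E(S)$, and the idempotent reached this way is unique (independent of the choice of $k$), so we may unambiguously write $e_a$ for it. Both directions then reduce to careful manipulation of exponents.

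For the easy direction ($\Leftarrow$), I would assume $a^{m_1} = b^{m_2} = e$ with $e$ idempotent and take $m := m_1 m_2$. Idempotency of $e$ gives $a^m = (a^{m_1})^{m_2} = e^{m_2} = e$ and, symmetrically, $b^m = e$, so $a^m = b^m$ and hence $a\,\rho\,b$.

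For the harder direction ($\Rightarrow$), start from $a^m = b^m$. Choose $k_a, k_b \in \mathbb{N}$ with $a^{k_a} = e_a$ and $b^{k_b} = e_b$ idempotent, and raise the identity $a^m = b^m$ to the power $k_a k_b$. Since $m k_a k_b$ is a multiple of $k_a$, we have $a^{m k_a k_b} = (a^{k_a})^{m k_b} = e_a^{m k_b} = e_a$, and symmetrically $b^{m k_a k_b} = e_b$. Hence $e_a = e_b$, and setting $m_1 := k_a$, $m_2 := k_b$, $e := e_a = e_b$ gives exactly the right-hand side of the theorem.

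The main obstacle is bookkeeping rather than a conceptual difficulty: one must ensure the common exponent produced in the forward direction is divisible by both $k_a$ and $k_b$, so that each side genuinely collapses to its associated idempotent and not to some other element in the eventual cycle of the monogenic subsemigroup generated by $a$ or $b$. The factor $k_a k_b$ achieves this, and the uniqueness of the associated idempotent (stated in the paper's preamble to the theorem) takes care of the rest.
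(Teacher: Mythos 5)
Your proof is correct: both directions are verified cleanly, and the key devices (passing to the exponent $m_1m_2$, respectively $mk_ak_b$, and using idempotency to collapse each side to its unique associated idempotent) are exactly the facts the paper records in the preamble before the theorem. The paper itself states this theorem without giving a proof, so your argument supplies precisely the routine verification the authors evidently had in mind.
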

\begin{thm}
	Let $S$ be a finite semigroup and $a,b \in S$ such that $a \rho  b$, then $a$ and $b$
	are connected by a path in the hypergraph $\Pow_H(S)$ if and only if $a\rho b$.
\end{thm}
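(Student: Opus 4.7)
The plan is to show that the path-components of $\Pow_H(S)$ are precisely the $\rho$-classes, which we do by proving the two implications separately and using the characterization of $\rho$ given in the preceding theorem: $a\rho b$ iff $a^{m_1}=b^{m_2}=e$ for some idempotent $e$ and some $m_1,m_2\in\mathbb{N}$. A consequence of that characterization (together with the uniqueness of the idempotent power of each element, noted just before the definition of $\rho$) is that $\rho$ is an equivalence relation, which we will use freely.

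For the forward direction (path implies $\rho$), I would take an arbitrary path $a=x_0,e_0,x_1,e_1,\ldots,x_{k-1},e_{k-1},x_k=b$ in $\Pow_H(S)$ and argue that consecutive vertices are $\rho$-related; transitivity then delivers $a\rho b$. Fix $j$ and look at $x_j,x_{j+1}\in e_j$. By condition (a) in the definition of $\Pow_H$, one is a power of the other; without loss of generality $x_{j+1}=x_j^{m}$ for some $m\in\mathbb{N}$. Pick $M$ large enough that $e:=x_j^{M}$ is the (unique) idempotent in the monogenic subsemigroup $\langle x_j\rangle$. Then $x_{j+1}^{M}=(x_j^{m})^{M}=x_j^{mM}=(x_j^{M})^{m}=e^{m}=e$, so $x_j$ and $x_{j+1}$ have a common idempotent power and hence $x_j\rho x_{j+1}$ by the previous theorem.

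For the reverse direction ($\rho$ implies path), assume $a\rho b$ and apply the previous theorem to obtain an idempotent $e\in E(S)$ and $m_1,m_2\in\mathbb{N}$ with $a^{m_1}=b^{m_2}=e$. The pair $\{a,e\}$ satisfies condition (a) of the hyperedge definition, because $a^{m_1}=e$; since $S$ is finite, $\{a,e\}$ extends to a maximal such set, i.e.\ a hyperedge $E_a$ of $\Pow_H(S)$. Similarly $\{b,e\}$ is contained in a hyperedge $E_b$. Then the sequence $a,E_a,e,E_b,b$ is a path in $\Pow_H(S)$ joining $a$ and $b$ (collapsing to a shorter path in the degenerate cases $a=e$ or $b=e$).

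The argument is essentially routine once one unpacks the definitions, and there is no single hard step; the only point requiring a little care is the computation $x_{j+1}^{M}=(x_j^{M})^{m}=e$ in the forward direction, which relies crucially on $e$ being idempotent and on each element of a finite semigroup having a (unique) idempotent power. The reverse direction is genuinely easy: the idempotent $e$ is a universal ``hub'' through which any two $\rho$-equivalent elements can be connected in just two hyperedges, so the diameter of each connected component of $\Pow_H(S)$ is at most $2$.
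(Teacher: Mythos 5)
Your argument is correct: in the forward direction you rightly observe that two vertices in a common hyperedge are powers of one another and hence share their unique idempotent power, so consecutive vertices of a path are $\rho$-related and transitivity of $\rho$ finishes; in the reverse direction the common idempotent $e$ with $a^{m_1}=b^{m_2}=e$ serves as a hub, $\{a,e\}$ and $\{b,e\}$ extend by finiteness to hyperedges $E_a,E_b$, and $a,E_a,e,E_b,b$ is the required path (with the degenerate cases handled). The paper actually states this theorem without a written proof (the bullet-point sketch that follows belongs to the theorem on components, and the authors defer to \cite{Cha}), but that sketch --- every vertex adjacent to exactly one idempotent, which acts as the hub of its component --- is precisely the idea you have fleshed out, so your proof supplies the details of the paper's intended approach rather than a different one.
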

\begin{thm}
	The components of the graph $\Pow_H(S)$ are precisely
	\begin{equation}
	\{C_e | e \in E(S)\} = \{a \in S | a\rho e\} = \{a \in S | a^m = e\}
	\end{equation}
	for some $m \in N$.
Each component $C_e$ contains the unique idempotent $e$.
\end{thm}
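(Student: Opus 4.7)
The proof should essentially synthesise the two preceding theorems. The second of them asserts that vertices $a, b \in S$ are joined by a path in $\Pow_H(S)$ if and only if $a \rho b$; consequently, the connected components of $\Pow_H(S)$ coincide with the equivalence classes of $\rho$. My first step would be to check (or simply record, since it is essentially immediate from the definition) that $\rho$ is an equivalence relation, using the already-noted fact that for each $a \in S$ there is a unique idempotent $e$ with $a^m = e$ for some $m$.

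Next I would index the $\rho$-classes by idempotents. Given $a \in S$, finiteness of $S$ produces a unique idempotent $e_a \in E(S)$ together with an $m \in \mathbb{N}$ for which $a^m = e_a$. Setting $C_e = \{a \in S : a^m = e \text{ for some } m \in \mathbb{N}\}$, the family $\{C_e : e \in E(S)\}$ partitions $S$ precisely because of this uniqueness. I would then invoke the characterisation theorem preceding the statement to identify these $C_e$ with the $\rho$-classes: if $a, b \in C_e$ then $a \rho b$ directly from the definition, while if $a \rho b$ then some common idempotent $e'$ satisfies $a^{m_1} = b^{m_2} = e'$, placing $a, b \in C_{e'}$. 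Combined with the first paragraph, this gives the promised description of the connected components.

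For the final assertion, uniqueness of the idempotent in each component is a one-line check. Certainly $e \in C_e$ (take $m = 1$). If $f \in E(S)$ also lies in $C_e$, then $f^m = e$ for some $m$; but $f^m = f$ because $f$ is idempotent, so $f = e$.

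The whole argument is bookkeeping on top of the two previous theorems, so I do not anticipate any real obstacle. The only subtle point is making sure that the map $a \mapsto e_a$ is well defined, i.e.\ that the idempotent reached by repeated powers of $a$ does not depend on the exponent chosen; this is precisely the observation made just before the definition of $\rho$, and it is what makes the partition into $C_e$'s well behaved.
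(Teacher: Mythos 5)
Your argument is correct, and it rests on the same underlying fact as the paper's: every element's powers reach one and only one idempotent. The difference is one of packaging. You route everything through the two preceding theorems (path-connectedness coincides with $\rho$, and $a\rho b$ holds exactly when $a$ and $b$ power into a common idempotent), and then do the bookkeeping to show the $\rho$-classes are the sets $C_e$ and that each contains exactly one idempotent; your final one-line check ($f\in E(S)$, $f^m=e$, $f^m=f$, so $f=e$) is exactly right. The paper's own proof is only a three-line sketch phrased in terms of adjacency rather than connectivity: since $e=a^m$ is a power of $a$, the vertex $a$ actually lies in a common hyperedge with its idempotent, so every vertex is \emph{adjacent} to one and only one idempotent, no two idempotents are joined by a path, and hence each component is precisely the set of vertices adjacent to its unique idempotent. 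That adjacency observation is slightly stronger than what you prove (it shows each component is, so to speak, centred on $e$, not merely connected to it) and it bypasses the path-characterisation theorem; on the other hand, your version is more complete as a written argument, since the paper leaves the bullets unjustified and its third bullet is essentially a restatement of the theorem. Either way the statement follows, and your only caveat --- well-definedness of the map $a\mapsto e_a$ --- is exactly the point the paper settles just before defining $\rho$.
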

\begin{proof}
	We notice the following:
	\begin{itemize}
		\item Every vertex in $\Pow_H(S)$ is adjacent to one and only one idempotent in $S$.
		\item No two idempotents are connected by a path.
		\item Each component of $\Pow_H(S)$ contains a unique idempotent to which every other vertices ofthat component are adjacent.
	\end{itemize}
\end{proof}

The following results and their proofs are similar to that in \cite{Cha}.

\begin{cor}
Let $S$ be a finite semigroup, then $\Pow_H(S)$ is connected if and only if $S$ contains a single idempotent.
\end{cor}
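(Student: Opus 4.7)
The plan is to deduce this corollary directly from the component characterization established in the previous theorem, together with the standard fact that a (hyper)graph is connected if and only if it has exactly one connected component.

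First I would recall from the preceding theorem that the components of $\Pow_H(S)$ are exactly the sets $C_e = \{a \in S \mid a \rho e\}$ as $e$ ranges over $E(S)$, and that each component contains a unique idempotent. In particular, the map $e \mapsto C_e$ is a bijection between $E(S)$ and the set of connected components of $\Pow_H(S)$. I would also note the standard observation, used implicitly throughout, that a finite semigroup $S$ always has at least one idempotent (for any $a \in S$, the sequence $a, a^2, a^3, \ldots$ eventually repeats, and some power $a^m$ satisfies $a^{2m} = a^m$), so $E(S)$ is nonempty and the set of components is nonempty.

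From these two facts the corollary is immediate: $\Pow_H(S)$ is connected if and only if it has exactly one component, which by the bijection $e \mapsto C_e$ holds if and only if $|E(S)| = 1$, i.e., $S$ has a single idempotent. I would present this as a two-line argument, separating the ``only if'' direction (if $\Pow_H(S)$ is connected then two distinct idempotents $e \ne f$ would give $C_e = C_f$, contradicting uniqueness of the idempotent in each component) from the ``if'' direction (if $E(S) = \{e\}$ then every vertex lies in $C_e$, which is the single component, so the hypergraph is connected).

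There is no real obstacle here; the entire content is packed into the component theorem above, and the corollary is essentially a rephrasing. The only point that might merit a sentence of care is confirming that $E(S)$ is nonempty in the finite case, so that ``single idempotent'' is a meaningful condition rather than vacuously demanding $|E(S)| \leq 1$.
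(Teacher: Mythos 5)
Your argument is correct and matches the paper's intent: the paper presents this as an immediate corollary of the preceding theorem identifying the components of $\Pow_H(S)$ with the sets $C_e$ for $e\in E(S)$, each containing a unique idempotent, so connectedness is equivalent to $|E(S)|=1$. Your added remark that $E(S)\neq\emptyset$ for a finite semigroup is exactly the observation the paper makes just before defining $\rho$, so nothing is missing.
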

\begin{cor}
If $G$ is a finite group then $\Pow_H(G)$ is always connected.
\end{cor}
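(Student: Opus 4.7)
The plan is to derive this as an immediate consequence of the preceding corollary together with the elementary fact that a group contains a unique idempotent. Since the corollary reduces connectedness of $\Pow_H(S)$ for a finite semigroup $S$ to the condition $|E(S)|=1$, it suffices to verify this condition for groups.

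First I would observe that the identity $1$ of $G$ is an idempotent, so $1\in E(G)$. Next, for any $e\in G$ with $e^2=e$, the existence of $e^{-1}$ in $G$ allows me to left-multiply by $e^{-1}$ to get $e=1$. Thus $E(G)=\{1\}$. Applying the previous corollary with $S=G$ then yields that $\Pow_H(G)$ is connected.

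The only step that requires any thought at all is the uniqueness of the idempotent, and this is trivial in a group because of invertibility; the rest is direct invocation. There is no real obstacle here, which is consistent with the paper's framing of this result as a corollary of the semigroup statement above. If one preferred a self-contained argument avoiding the corollary, an alternative path would be to show that the identity lies in every hyperedge of $\Pow_H(G)$: for any hyperedge $E$ and any $a\in E$, the element $1=a^{|a|}$ is a power of $a$, and for every $b\in E$ the element $b$ itself is a power of $b$, so $E\cup\{1\}$ still satisfies condition (a) of the definition, forcing $1\in E$ by maximality. Any two vertices are then joined by a length-two path through $1$. However, since the corollary has already been established, the shorter route via idempotents is preferable.
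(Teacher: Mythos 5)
Your proof matches the paper's intended argument: the paper treats this as an immediate consequence of the preceding corollary, since a group has the identity as its unique idempotent, which is exactly your main route. Your alternative self-contained argument (identity lies in every hyperedge by maximality, giving a length-two path through $1$) is also correct, but the short route you prefer is the one the paper takes.
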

\begin{thm}
Let $G$ be a group. Then $\Pow_H(G)$ is connected if and only if every element
of $G$ is of finite order (i.e., $G$ is a periodic group).
\end{thm}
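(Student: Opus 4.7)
The plan is to prove both implications directly, using as the structural core the observation that a single hyperedge of $\Pow_H(G)$ cannot mix elements of finite and infinite order. (Throughout I adopt the convention $\mathbb{N}=\{1,2,\ldots\}$: otherwise $e=a^0$ would be a power of every element and the statement would be vacuous.)

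For the direction ``periodic implies connected'', I would take any $a\in G$, let $n$ be its (finite) order, and note that $e=a^n$ is a positive power of $a$. Hence $\{e,a\}$ satisfies condition (a) in the definition of $\Pow_H(G)$, and by maximality it extends to some hyperedge $E$ containing both. So every vertex lies in a common hyperedge with $e$, and $\Pow_H(G)$ is trivially connected.

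For the converse, suppose some $a\in G$ has infinite order. The key lemma I need is: if a hyperedge $E$ of $\Pow_H(G)$ contains an element of infinite order, then every element of $E$ has infinite order. To prove this, take $a\in E$ of infinite order and any $b\in E$; by condition (a) either $a=b^m$ or $b=a^n$ for some positive integers $m,n$. In the first case $\langle b\rangle$ contains $a$ of infinite order, so $\langle b\rangle$ is itself infinite and $b$ has infinite order. In the second case $b^r=a^{nr}\ne e$ for all $r\ge 1$, again forcing $b$ to have infinite order.

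Given the lemma, a straightforward induction along any path $x_1,e_1,x_2,\ldots,x_k$ starting from $x_1=a$ shows that every $x_i$ has infinite order, since each $e_i$ contains the previous $x_i$ and hence consists entirely of infinite-order elements by the lemma. No such path can reach the identity, so $a$ and $e$ lie in different components and $\Pow_H(G)$ is disconnected. The main obstacle is spotting and proving the key lemma that ties each hyperedge to a single ``order type''; once that structural fact is in hand, both directions follow essentially by inspection.
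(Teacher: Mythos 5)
Your proposal is correct, and both directions are argued soundly: the forward direction works because in a periodic group $e=a^{|a|}$ is a positive power of every $a$, so every vertex shares a hyperedge with $e$; and your key lemma (a hyperedge containing an element of infinite order consists entirely of elements of infinite order) is exactly the right structural fact to block any path from an infinite-order element to the identity. Note, though, that the paper itself offers no explicit proof of this theorem: it is stated after the remark that ``the following results and their proofs are similar to that in~\cite{Cha}'', so the intended argument is inherited from the power-graph result of Chakrabarty--Ghosh--Sen and from the preceding idempotent-based analysis of finite semigroups (components correspond to idempotents, and a group has only the idempotent $e$). Your route is the direct translation of that power-graph argument to the hypergraph setting, and it has the virtue of being self-contained; the paper's route buys uniformity with the semigroup theorems but leaves the infinite case implicit. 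One small point worth making explicit in your write-up: for infinite periodic $G$ you should justify that the pair $\{e,a\}$ really does extend to a hyperedge, i.e.\ to a \emph{maximal} set satisfying condition (a). Since condition (a) is a condition on pairs, it is preserved under unions of chains, so Zorn's lemma supplies the required maximal set; without some such remark the existence of hyperedges through a given pair is not automatic in an infinite group. Your remark fixing $\mathbb{N}=\{1,2,\ldots\}$ is also apt, since allowing the exponent $0$ would make the identity adjacent to everything and the theorem false as stated.
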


\section{Generating hypergraphs}

Let $S$ be a finite semigroup. The generating hypergraph $\Gen_H (S)$ of $S$ is an undirected  hypergraph with vertex set $S$ and $E \subseteq S$ is a hyperedge if and only if $E$ generates $S$ and none of the proper subsets of $E$ generates $S$.

There have been many investigations of the generating graph, especially for
a finite group. Of course, if the graph cannot be generated by two elements,
this graph is null; so attention has focussed on almost simple groups. (We
know from the Classification of Finite Simple Groups that any finite simple
group can be generated by two elements.)  Some replacements which work for
larger number of generators have been proposed by Lucchini and co-authors
\cite{lucchini,flnr}. However, hypergraphs may be more natural to use in
this situation.

Figure~\ref{f:gen} shows the generating hypergraph of the Klein $4$-group
$G=\{e,a,b,c\}$. Any $2$-element subset not containing the identity is an
edge of the hypergraph.
	
	\begin{figure}[htbp]
		\centering
		\includegraphics[width=0.3\linewidth]{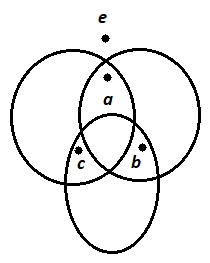}
		\caption{Generating hypergraph of the Klein $4$-group}
		\label{f:gen}
	\end{figure}

Finite groups may have minimal generating sets with different cardinalities,
so the generating hypergraph is not uniform in general. However, there is a
particular case when it is:

\begin{thm}
Let $G$ be a finite group whose order is a power of the prime~$p$. Then the
generating hypergraph of $G$ is the basis hypergraph of a matroid.
\end{thm}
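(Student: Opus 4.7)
The plan is to invoke the Burnside Basis Theorem and reduce the problem to the matroid structure of a finite-dimensional vector space. Let $\Phi(G)$ denote the Frattini subgroup of $G$; since $G$ is a finite $p$-group, the quotient $V := G/\Phi(G)$ is an elementary abelian $p$-group, hence an $\mathbb{F}_p$-vector space, and a subset $X \subseteq G$ generates $G$ if and only if its image under the projection $\pi \colon G \to V$ spans $V$. This standard result is the only group-theoretic input I would need.

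From this I would deduce that a subset $X \subseteq G$ is a minimal generating set if and only if $\pi|_X$ is injective and $\pi(X)$ is a basis of $V$. Indeed, if $\pi(x_1) = \pi(x_2)$ for distinct $x_1, x_2 \in X$, then $\pi(X \setminus \{x_1\})$ still spans $V$, contradicting minimality; and a minimal spanning subset of a vector space is a basis. Conversely, any $X$ with $\pi|_X$ injective and $\pi(X)$ a basis has cardinality $d := \dim_{\mathbb{F}_p} V$, and removing any element destroys the spanning property of the image, so $X$ is minimal. In particular the hypergraph is uniform of rank $d$, and the axiom requiring the existence of a basis is immediate since $G$ is finitely generated.

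To verify the exchange axiom, let $A, B$ be bases of the hypergraph and let $b \in B \setminus A$. I would split into two cases. If $\pi(b) \notin \pi(A)$, expand $\pi(b) = \sum_i c_i\, \pi(a_i)$ in the basis $\pi(A)$; the vector-space exchange axiom ensures that for any $a_i$ with $c_i \neq 0$, the set $(\pi(A) \setminus \{\pi(a_i)\}) \cup \{\pi(b)\}$ is again a basis of $V$. Moreover, not every such $a_i$ can lie in $A \cap B$, for otherwise $\pi(b)$ would lie in the span of $\pi(B) \setminus \{\pi(b)\}$, contradicting that $\pi(B)$ is a basis; thus I may choose the element to remove, call it $a$, to lie in $A \setminus B$. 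If instead $\pi(b) \in \pi(A)$, let $a \in A$ be the unique preimage of $\pi(b)$ under $\pi|_A$; then $a \neq b$ because $b \notin A$, and $a \notin B$ because $\pi|_B$ is injective and $\pi(a) = \pi(b)$. In either case $(A \setminus \{a\}) \cup \{b\}$ maps under $\pi$ to a basis of $V$ with $\pi$ still injective on it, hence is again a basis of the hypergraph.

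The main subtlety is that the exchange axiom requires $a \in A \setminus B$ rather than merely $a \in A$; the case analysis above handles this via a linear-dependence argument when $\pi(b) \notin \pi(A)$ and via injectivity of $\pi|_B$ when $\pi(b) \in \pi(A)$. Everything else is a direct translation through the Burnside correspondence.
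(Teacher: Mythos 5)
Your proof is correct and follows essentially the same route as the paper: the Burnside basis theorem reduces generation in $G$ to spanning in the $\mathbb{F}_p$-vector space $G/\Phi(G)$, so that minimal generating sets are exactly the injective lifts of bases of that space. The only real difference is that where the paper describes the resulting matroid as the vector-space matroid with each element replaced by $|\Phi(G)|$ parallel elements (plus loops), you verify the basis exchange axiom directly, which amounts to the same parallel-class bookkeeping spelled out by hand.
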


\begin{proof}
Let $\Phi(G)$ be the Frattini subgroup of $G$. Then $G/\Phi(G)$ is an
elementary abelian $p$-group. By the Burnside basis theorem, a subset $S$
of~$G$ is a minimal generating set if and only if the set
$\{\Phi(G)s:s\in S\}$ is a minimal generating set of $G/\Phi(G)$. So the
edges of the generating hypergraph of $G$ are obtained from those of
$G/\Phi(G)$ by choosing one element from each of the corresponding cosets. So
it is enough to prove the result for an elementary abelian $p$-group.

Now, as noted above, $G/\Phi(G)$ is elementary abelian, and so it can be
identified with a vector space over the field with $p$ elements. The minimal
generating sets are precisely the bases of this vector space, which (as is
well known) form a matroid. Then edges of the generating hypergraph for $G$
are obtained from this matroid by replacing each element by a set of
$|\Phi(G)|$ parallel elements.
\end{proof}

This property does not characterise groups of prime power order.

\paragraph{Example}
Consider the symmetric group $S_3$ of order~$6$. Every pair of non-identity
elements except for the two elements of order~$3$ generates the group. So
the generating hypergraph is the basis of the matroid obtained from the
uniform matroid $U_{2,4}$ (whose bases are all $2$-subsets of a $4$-set)
by adding a loop and replacing one non-loop by a pair of parallel elements.

\medskip

Moreover, it is not true that, if we take the generating sets of minimum size as hyperedges, then they form the bases of a matroid.

\paragraph{Example} Let the cyclic group $\mathbb{Z}_6$ be generated by elements
$a$ and $b$ of orders $3$ and $2$ respectively. Take the direct product of two
copies of this group, where the factors are generated by $\{a_1,b_1\}$ and
$\{a_2,b_2\}$. Then $\{(a_1b_1,1),(1,a_2b_2)\}$ and $\{(a_1,b_2),(b_1,a_2)\}$
are both minimal generating sets for $\mathbb{Z}_6\times\mathbb{Z}_6$.
However, if we replace an element of the first generating set by one from
the second set, we do not get a generating set for the group. For example,
the group generated by $\{(a_1b_1,1),(a_1,b_2)\}$ does not contain $(1,a_2)$.

\paragraph{Question} Is it possible to describe groups whose generating
hypergraph is the basis hypergraph of a matroid?

\section{Concluding Remarks}

In this paper we have extended the concept of graphs defined on algebraic structures to four types of hypergraphs. There are many other graphs defined from algebraic structures, say for example identity graphs in which two vertices $x$ and $y$ are made adjacent, if $x.y = e$ in the group $G$, where $e$ is the identity element of $G$. This can be extended to power hypergraphs where $E=\{x_1,x_2,\dots,x_n\}$ is a hyperedge if $x_1.x_2\dots .x_n = e$. The group under consideration must be abelian, since otherwise, the question of order in which elements are to be operated comes into picture. We may impose maximality or minimality condition on the hyperedge $E$. If we apply maximality condition, then the groups without involutions and the groups having more than one involution, will have only one hyperedge $E$ which contains all the vertices of $G$. Therefore, this definition will be interesting only for finite abelian groups having exactly one involution. For example, if we consider the group $(Z_8,+_8)$, then the hyperedges of the maximal identity hypergraph will be $\{0,1,4,5,6\}, \{0,2,3,4,7\}, \{0, 1, 2, 3, 4, 6\}, \{0,2,4,5,6,7\}$ and \{0,1,2,3,5,6,7\}. If we are imposing minimality condition for the hyperedge $E$, then $\{e\}$ will be a hyperedge, an element (which is not an involution) together with its inverse will be a hyperedge and the remaining hyperedges will be determined by the involutions, if any, present in the group. Again, if we consider $(Z_8,+_8)$, then the hyperedges of the minimal identity hypergraph will be $\{0\}, \{1,7\}, \{2,6\}, \{3,5\}, \{1,2,5\}, \{1,3,4\}, \{4,5,7\}, \{1,4,5,6\}$ and $\{2,3,4,7\}$. We expect that there is much more to explore in this direction.

\paragraph{Acknowledgement:} The authors thank Research Discussion on Graphs and Groups (RDGG 2021) organized by Cochin University of Science and Technology which was the foundation for the ideas presented in this paper and in which the first author was the lead speaker, the second author was an organizer and third author was a participant.


\begin{thebibliography}{9}

\bibitem{aacns}
Ghodratallah Aalipour, Saieed Akbari, Peter J. Cameron, Reza Nikandish and
Farzad Shaveisi,
On the structure of the power graph and the enhanced power graph of a group,
\textit{Electronic J. Combinatorics} \textbf{24(3)} (2017), paper 3.16.

\bibitem{Cha}
I. Chakrabarty, S. Ghosh, and M. K. Sen,
Undirected power graphs of semigroups,
\textit{Semigroup Forum} \textbf{78} (2009), 410--426.

\bibitem{flnr}
Saul D. Freedman, Andrea Lucchini, Daniele Nemmi and Colva Roney-Dougal,
Finite groups satisfying the independence property, in preparation.

\bibitem{Hyp}A. R. Dharmarajan and K. Kannan,
Hyper paths and hyper cycles,
\textit{International Journal of Pure and Applied Mathematics} \textbf{98(3)} (2015), 309--312.

\bibitem{lucchini}
A. Lucchini,
The independence graph of a finite group,
\textit{Monatsh. Math.} \textbf{193} (2020), 845--856.

\bibitem{matroid}
James G. Oxley,
\textit{Matroid Theory},
Oxford University Press, Oxford, 1992.

\bibitem{gh}
Vitaly Voloshin,
\textit{Introduction to Graph and Hypergraph Theory}, Nova, New York, 2009.
\end{thebibliography}
\end{document}